\theoremstyle{definition}
\theoremstyle{definition}
\theoremstyle{plain}
\newtheorem{theo}{Theorem}
\theoremstyle{plain}
\newtheorem{coro}{Corollary}
\theoremstyle{plain}
\theoremstyle{plain}
\theoremstyle{plain}
\theoremstyle{plain}
\theoremstyle{plain}
\newtheorem{thm}{Theorem}[section]
\theoremstyle{definition}
\theoremstyle{definition}
\theoremstyle{definition}
\theoremstyle{definition}
\theoremstyle{definition}
\newtheorem{rem}[thm]{Remark}
\theoremstyle{plain}
\newtheorem{prop}[thm]{Proposition}
\theoremstyle{plain}
\newtheorem{lem}[thm]{Lemma}
\theoremstyle{plain}
\newtheorem{cor}[thm]{Corollary}
\theoremstyle{definition}
\theoremstyle{definition}
\theoremstyle{plain}
\newtheorem{conje}[thm]{Conjecture}
\theoremstyle{definition}
\theoremstyle{plain}
\newtheorem{prob}[thm]{Problem}
\numberwithin{equation}{section}
\def\F{\mathbb{F}}
\begin{document}

\title{On the existence of superspecial nonhyperelliptic curves of genus $4$}
\author{Momonari Kudo\thanks{Kobe City College of Technology.}
\thanks{Institute of Mathematics for Industry, Kyushu University. E-mail: \texttt{m-kudo@math.kyushu-u.ac.jp}}}
\providecommand{\keywords}[1]{\textbf{\textit{Key words---}} #1}
\maketitle

\begin{abstract}
A curve over a perfect field $K$ of characteristic $p > 0$ is said to be {\it superspecial} if its Jacobian is isomorphic to a product of supersingular elliptic curves over the algebraic closure $\overline{K}$.
In recent years, isomorphism classes of superspecial nonhyperelliptic curves of genus $4$ over finite fields in small characteristic have been enumerated.
In particular, the non-existence of superspecial curves of genus $4$ in characteristic $p = 7$ was proved.
In this note, we give an elementary proof of the existence of superspecial nonhyperelliptic curves of genus $4$ for infinitely many primes $p$.
Specifically, we prove that the variety $C_p : x^3+y^3+w^3= 2 y w + z^2 = 0$ in the projective $3$-space with $p > 2$ is a superspecial curve of genus $4$ if and only if $p \equiv 2 \pmod{3}$.
Our computational results show that $C_p$ with $p \equiv 2 \pmod 3$ are {\it maximal} curves over $\mathbb{F}_{p^2}$ for all $3 \leq p \leq 269$.
\end{abstract}

\keywords{Nonhyperelliptic curves, Superspecial curves, Maximal curves}

\section{Introduction}\label{sec:intro}

Let $p$ be a rational prime greater than $2$, and let $\mathbb{F}_q$ denote the finite field of $q$ elements, where $q$ is a power of prime.
Let $K$ be an arbitrary perfect field of characteristic $p$.
We denote by $\overline{K}$ the algebraic closure of $K$.
By a curve, we mean a non-singular projective variety of dimension one.
Let $C$ be a curve of genus $g$ over $K$.
We say that $C$ is {\it superspecial} if its Jacobian is isomorphic to the product of $g$ supersingular elliptic curves over $\overline{K}$.
The existence of a superspecial curve over an algebraically closed field in characteristic $p$ implies that there exists a maximal or minimal curve over $\mathbb{F}_{p^2}$.
Here a curve over $\mathbb{F}_q$ is called a maximal (resp.\ minimal) curve if the number of its $\mathbb{F}_q$-rational points attains the Hasse-Weil upper (resp.\ lower) bound $q + 1 + 2 g \sqrt{q}$ (resp.\ $q + 1 - 2 g \sqrt{q}$).
Conversely, any maximal or minimal curve over $\mathbb{F}_{p^2}$ is superspecial.
This work aims to find a lot of superspecial curves and maximal curves for a given genus.
Note that for a fixed pair $(g,q)$, superspecial curves over $\overline{\mathbb{F}_q}$ of genus $g$ are very rare:
the number of such curves is finite, whereas the whole set of curves over $\overline{\mathbb{F}_q}$ of genus $g$ has dimension $3 g - 3$.
Thus, finding superspecial curves over $\mathbb{F}_q$ of higher genus $g$ is more difficult than finding those of lower genus $g$.

In the case of $g\le 3$ and in the case of hyperelliptic curves, many results on the existence and enumeration of superspecial/maximal curves are known, see e.g., \cite{Deuring}, \cite[Prop.\ 4.4]{XYY16} for $g=1$, \cite{HI}, \cite{IK}, \cite{Serre1983} for $g=2$, \cite{Hashimoto}, \cite{Ibukiyama} for $g=3$, and \cite{Taf}, \cite{Taf2} for hyperelliptic curves.
In particular, it is well-known that there exist supersingular (and thus superspecial) elliptic curves in characteristic $p$ for infinitely many primes $p$ (see, e.g., \cite[Examples 4.4 and 4.5]{Silv})).
For example, the elliptic curve $E_p : y^2 = x^3 + 1$ with $p \geq 5$ is supersingular if and only if $p \equiv 2 \pmod{3}$.
Moreover, the set of primes $p$ for which $E_p$ is supersingular has natural density $1/2$.

In the case of {\it nonhyperelliptic} curves of genus $g=4$, Fuhrmann-Garcia-Torres proved in \cite{FGT} that there exists a maximal (and superspecial) curve $C_{0}$ of $g=4$ over $K= \F_{5^2}$, and that it gives a unique $\overline{K}$-isomorphism class.
In \cite{KH16}, \cite{KH17-2} and \cite{KH17a}, the isomorphism classes of superspecial nonhyperelliptic curves of genus $4$ over finite fields are enumerated in characteristic $p \leq 11$.
Results in \cite{KH16}, \cite{KH17-2} and \cite{KH17a} also show that there exist superspecial nonhyperelliptic curves of genus $4$ in characteristic $5$ and $11$, whereas there does not exist such a curve in characteristic $7$.

The objective of this note is to investigate whether a superspecial nonhyperelliptic curve of genus $g=4$ exists or not for $p \geq 13$.
In contrast to the rarity of superspecial curves of higher genus, our main results (Theorem \ref{thm:main} and Corollary \ref{cor:main} below) show the existence of superspecial curves of genus $g=4$ in characteristic $p$ for half of the primes as well as the case of $g=1$.

\begin{theo}
Put $Q:= 2 y w + z^2$ and $P := x^3+y^3+w^3$.
Let $C_p = V(Q, P)$ denote the projective zero-locus in $\mathbf{P}^3 = \mathrm{Proj}(\overline{K}[x,y,z,w])$ defined by $Q=0$ and $P=0$.
Then $C_p$ is a superspecial nonhyperelliptic  curve of genus $4$ if and only if $p \equiv 2 \pmod{3}$.
\end{theo}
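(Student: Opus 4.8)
The plan is to split the statement into two essentially independent parts: that $C_p$ is a smooth nonhyperelliptic curve of genus $4$, and that its Hasse--Witt (Cartier--Manin) matrix is the zero matrix exactly when $p\equiv 2\pmod 3$. Recall that a curve is superspecial if and only if its Hasse--Witt matrix vanishes, so once the geometry is fixed the whole statement becomes a computation of $16$ coefficients.

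First I would settle the geometry. Since $Q,P$ are homogeneous of degrees $2,3$, if $C_p=V(Q,P)$ is smooth it has genus $4$ by the complete-intersection genus formula $2g-2=d_1d_2(d_1+d_2-4)$, and a smooth complete intersection of a quadric and a cubic in $\mathbf P^3$ is precisely a canonically embedded nonhyperelliptic genus-$4$ curve, with $H^0(C_p,\Omega^1)=\langle x,y,z,w\rangle$ because $\omega_{C_p}=\mathcal O_{C_p}(1)$. Thus ``nonhyperelliptic of genus $4$'' is automatic once smoothness holds. I would check smoothness by the Jacobian criterion: with $\nabla Q=(0,2w,2z,2y)$ and $\nabla P=(3x^2,3y^2,0,3w^2)$, a short case analysis (one gradient zero, or the two proportional) shows there is no common zero of $Q,P$ at which the gradients are dependent once $p\ge 5$; the cases $p=2$ ($Q=z^2$) and $p=3$ ($P=(x+y+w)^3$, $\nabla P\equiv 0$) are degenerate but are excluded, since $p>2$ and $3\not\equiv 2\pmod 3$.

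Next I would invoke the explicit description of the Hasse--Witt matrix of a complete-intersection curve used in the enumeration works cited above: writing $x_1,\dots,x_4=x,y,z,w$, the matrix is $H=(h_{ij})$, where $h_{ij}$ is the coefficient of $\prod_{l}x_l^{(p-1)+p\delta_{jl}-\delta_{il}}$ in $Q^{p-1}P^{p-1}$ (this monomial has the right degree $5(p-1)$). The point of our special $Q,P$ is that each factor expands with a single index, $Q^{p-1}=\sum_k\binom{p-1}{k}2^k\,y^kz^{2(p-1-k)}w^k$ and $P^{p-1}=\sum_{a+b+c=p-1}\binom{p-1}{a,b,c}x^{3a}y^{3b}w^{3c}$, so a monomial $x^{3a}y^{k+3b}z^{2(p-1-k)}w^{k+3c}$ occurs with the single coefficient $\binom{p-1}{k}2^k\binom{p-1}{a,b,c}$. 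Matching this to the target exponent vector $(e_x,e_y,e_z,e_w)$ determines $a,b,c,k$ uniquely, and by Lucas' theorem (no carries, since $k\le p-1$ and $a+b+c=p-1<p$) the coefficient is nonzero mod $p$ whenever the indices are legal. Hence $h_{ij}\neq 0$ iff the constraints $3\mid e_x$, $e_z$ even and in $[0,2(p-1)]$, and $e_y\equiv e_w\equiv k\pmod 3$ with $a,b,c,k\ge 0$ (where $k=(p-1)-e_z/2$) are simultaneously solvable. The parity of $e_z=(p-1)+p\delta_{j3}-\delta_{i3}$ forces $i=3\Leftrightarrow j=3$, killing every entry with exactly one index equal to $3$. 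The $(z,z)$ entry gives $k=0$ and reduces to $3\mid(p-1)$ with $a=b=c=(p-1)/3$, so it equals the nonzero multinomial $\binom{p-1}{(p-1)/3,(p-1)/3,(p-1)/3}$ precisely when $p\equiv 1\pmod 3$. For $i,j\in\{1,2,4\}$ one has $k=(p-1)/2$, and when $p\equiv 2\pmod 3$ the condition $3\mid e_x$ eliminates all but the four entries $(x,y),(x,w),(y,x),(w,x)$; for those the residue test fails, because $k=(p-1)/2\equiv 2\pmod 3$ while the relevant $e_y$ or $e_w$ is $\equiv 0\pmod 3$. Thus $H=0$ for $p\equiv 2\pmod 3$ and $h_{zz}\neq 0$ for $p\equiv 1\pmod 3$, which together with the geometry is exactly the asserted equivalence.

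The \textbf{main obstacle} is the bookkeeping for those four off-diagonal entries that survive the coarse divisibility test when $p\equiv 2\pmod 3$: ruling them out requires the finer datum $k=(p-1)/2\bmod 3$, i.e.\ $p\bmod 6$ rather than merely $p\bmod 3$. Conceptually the driving mechanism is the divisibility-by-$3$ obstruction coming from the Fermat-type cubic $P=x^3+y^3+w^3$, which is what turns everything into a condition on $p\bmod 3$ and parallels the classical supersingularity of $y^2=x^3+1$. Two points must not be skipped: the appeal to Lucas' theorem, which guarantees that a solvable constraint system produces a genuinely nonzero coefficient mod $p$ (no accidental cancellation), and, upstream, the justification of the complete-intersection Hasse--Witt formula itself, which can be taken from the prior enumeration work or derived directly via iterated Poincar\'e residues.
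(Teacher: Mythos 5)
Your proposal is correct, and it follows the same overall outline as the paper: smoothness via the Jacobian criterion, genus $4$ and nonhyperellipticity from the quadric--cubic complete-intersection structure, and then the reduction of superspeciality to the vanishing of the $16$ coefficients of the monomials $\prod_l x_l^{(p-1)+p\delta_{jl}-\delta_{il}}$ in $(QP)^{p-1}$, which is precisely the criterion the paper imports from the earlier enumeration work (its Proposition 2.2). Your case analysis also lands in the same places: parity of the $z$-exponent kills the six entries with exactly one index equal to $z$ (the paper's Lemma 2.3), the $(z,z)$ entry with coefficient $\binom{p-1}{(p-1)/3,(p-1)/3,(p-1)/3}$ is the obstruction for $p\equiv 1\pmod 3$ (Lemmas 2.6--2.7), and the four candidates $(x,y),(x,w),(y,x),(w,x)$ are the ones the paper must also rule out by a congruence for $p\equiv 2\pmod 3$ (Lemma 2.4). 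Where you genuinely differ is in how the coefficients are organized. The paper multiplies out $QP$ into six monomials and expands $(QP)^{p-1}$ by the multinomial theorem in six exponents $(a,b,c,d,e,f)$, so each target coefficient is a sum over the solution set of a $5\times 6$ integer linear system, which it analyzes by row reduction and reduction modulo $3$, plus a separate uniqueness argument for $p\equiv 1\pmod 3$. You instead keep the factorization $(QP)^{p-1}=Q^{p-1}P^{p-1}$ and expand the two factors separately; since $Q^{p-1}$ involves the single index $k$ and $P^{p-1}$ the triple $(a,b,c)$, every monomial of the product arises in exactly one way, so each entry is the single product $2^k\binom{p-1}{k}\binom{p-1}{a,b,c}$, automatically nonzero mod $p$ whenever the exponent constraints are solvable. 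This buys a real simplification: the linear-algebra reduction disappears (solvability becomes a handful of congruences mod $3$ plus one parity check), and the non-cancellation issue that the paper settles through its uniqueness lemma is trivial by construction. Two minor caveats: your smoothness sketch should be carried out in full (the paper does this by exhibiting $x^5$, $2y^4$, $z^5$, $2w^4$ inside the ideal generated by $P$, $Q$ and the $2\times 2$ minors, which also shows $C_3$ is singular, consistent with the ``only if'' direction at $p=3$), and your worry about needing $p\bmod 6$ is vacuous, since an odd prime with $p\equiv 2\pmod 3$ automatically satisfies $p\equiv 5\pmod 6$, so $k=(p-1)/2\equiv 2\pmod 3$ is indeed determined by the hypothesis.
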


We prove Theorem \ref{thm:main} by simple computations in linear and fundamental commutative algebra and in combinatorics together with results in \cite{KH16}, \cite{KH17-2} and \cite{KH17a} (so this note also complements results in these three previous papers).
As a corollary of this theorem, we have the following:

\begin{coro}
There exist superspecial nonhyperelliptic curves of genus $4$ in characteristic $p$ for infinitely many primes $p$.
The set of primes $p$ for which $C_p$ is superspecial has natural density $1/2$. 
\end{coro}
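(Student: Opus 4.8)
The plan is to read off the Corollary directly from Theorem \ref{thm:main} together with classical facts about the distribution of primes in arithmetic progressions. Theorem \ref{thm:main} asserts that, for $p > 2$, the variety $C_p$ is a superspecial nonhyperelliptic curve of genus $4$ precisely when $p \equiv 2 \pmod 3$. Hence both claims reduce to purely number-theoretic statements about the set $S := \{\,p \text{ prime} : p \equiv 2 \pmod 3\,\}$: one must show that $S$ is infinite and that it has natural density $1/2$ among all primes.

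For the infinitude I would invoke Dirichlet's theorem on primes in arithmetic progressions, applied to the residue class $2$ modulo $3$; each such prime $p$ then yields, by Theorem \ref{thm:main}, a superspecial nonhyperelliptic curve $C_p$ of genus $4$, which gives the first assertion. (If one prefers to stay elementary, the infinitude of primes $p \equiv 2 \pmod 3$ also follows from a short Euclid-style argument, so no deep input is strictly necessary for this part.)

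For the density statement I would appeal to the prime number theorem for arithmetic progressions: whenever $\gcd(a,q) = 1$ one has $\pi(x; q, a) \sim \tfrac{1}{\varphi(q)}\,\pi(x)$ as $x \to \infty$, where $\pi(x; q, a)$ counts the primes $\le x$ that are congruent to $a$ modulo $q$. Taking $q = 3$, $a = 2$ and using $\varphi(3) = 2$, this gives
\[
\lim_{x \to \infty} \frac{\#\{\,p \le x : p \equiv 2 \pmod 3\,\}}{\#\{\,p \le x\,\}} \;=\; \frac{1}{\varphi(3)} \;=\; \frac{1}{2}.
\]
Since by Theorem \ref{thm:main} the primes for which $C_p$ is superspecial are exactly those in $S$, its natural density equals $1/2$, as claimed. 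I expect no genuine obstacle here: the only point warranting (mild) care is that discarding the finitely many excluded primes---namely $p = 2$, for which the theorem makes no assertion, and $p = 3$, which occupies the residue class $0$ modulo $3$---does not affect the limiting ratio, so the natural density is unchanged. This mirrors exactly the genus-one situation recalled in the introduction for $E_p : y^2 = x^3 + 1$.
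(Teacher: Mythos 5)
Your proposal is correct and follows essentially the same route as the paper: the infinitude is obtained from Theorem \ref{thm:main} together with Dirichlet's theorem, and the density $1/2$ comes from the fact that primes $p \equiv 2 \pmod{3}$ have density $1/\varphi(3) = 1/2$ among all primes. The extra details you supply (the explicit limit via the prime number theorem for arithmetic progressions and the remark that discarding $p=2,3$ does not affect the density) only flesh out what the paper states tersely.
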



Theorem \ref{thm:main} and Corollary \ref{cor:main} also give a partial answer to the genus $4$ case of the problem proposed by Ekedahl in 1987, see p.\ 173 of \cite{Ekedahl}.
In Section \ref{sec:comp}, we give a table of the number of $\mathbb{F}_{p^2}$-rational points on $C_p$ for $3 \leq p \leq 269$ obtained by using a computer algebra system Magma~\cite{Magma}.
As computational results, we found maximal nonhyperelliptic curves of genus $4$ over $\mathbb{F}_{p^2}$.
Specifically, we have that for all $3 \leq p \leq 269$ with $p \equiv 2 \pmod 3$, the curves $C_p$ are maximal over $\mathbb{F}_{p^2}$.


\subsection*{Acknowledgments}
The author thanks Shushi Harashita for his comments to the preliminary version of this note.
He gave the author information on the existence of superspecial curves of genus $g$ over $\mathbb{F}_q$ in the case of $g \leq 3$, in the case of $(g,q)=(4,13)$, and in the hyperelliptic case.
He also pointed out that computing the rational points of our curves is reduced into solving a diagonal equation.

\section{Superspecialty of curves $x^3+y^3+w^3= 2 y w + z^2 = 0$}\label{sec:pre}

As in the previous section, let $K$ be a perfect field of characteristic $p>2$.
Let $K[x,y,z,w]$ denote the polynomial ring of the four variables $x$, $y$, $z$ and $w$ over $K$.
As examples of superspecial curves of genus $g=4$ in characteristic $p=5$ and $11$, we have the projective varieties in the projective $3$-space $\mathbf{P}^3 = \mathrm{Proj}(\overline{K}[x,y,z,w])$ defined by the same systems of equations: $x^3+y^3+w^3=0$ and $2 y w + z^2 = 0$, see \cite[Exmaple 6.2.1]{KH16} and \cite[Proposition 4.4.4]{KH17-2}.

In this section, we shall prove that the variety $x^3+y^3+w^3= 2 y w + z^2 = 0$ over $K$ is (resp.\ not) a superspecial curve of genus $4$ if $p \equiv 2 \pmod{3}$ (resp.\ $p \equiv 1 \pmod{3}$).
Throughout this section, we set $Q:= 2 y w + z^2$ and $P := x^3+y^3+w^3$.
Let $C_p$ denote the projective variety $V(Q,P)$ in $\mathbf{P}^3$ defined by $P= Q = 0$ in characteristic $p$.
First, we prove that the variety $C_p$ is non-singular (resp.\ singular) if $p > 3$ (resp.\ $p=3$).

\begin{lem}\label{lem:sing}
If $p > 3$ $($resp.\ $p=3)$, then the variety $C_p = V(Q,P)$ is non-singular $($resp.\ singular$)$.
\end{lem}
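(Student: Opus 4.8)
The plan is to apply the Jacobian criterion to the complete intersection $C_p = V(Q,P) \subset \mathbf{P}^3$. Since $P$ and $Q$ cut out a curve (codimension $2$, and $\deg P \cdot \deg Q = 6$ confirms a complete intersection once one checks they share no component, which is routine), a point of $C_p$ is smooth exactly when the $2 \times 4$ Jacobian matrix
$$
J = \begin{pmatrix} \partial P/\partial x & \partial P/\partial y & \partial P/\partial z & \partial P/\partial w \\ \partial Q/\partial x & \partial Q/\partial y & \partial Q/\partial z & \partial Q/\partial w \end{pmatrix} = \begin{pmatrix} 3x^2 & 3y^2 & 0 & 3w^2 \\ 0 & 2w & 2z & 2y \end{pmatrix}
$$
has rank $2$ there, and singular exactly when all six $2\times 2$ minors vanish simultaneously. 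So first I would record these minors, which up to the common factor $6$ are $x^2 w$, $x^2 z$, $x^2 y$, $y^2 z$, $y^3 - w^3$, and $w^2 z$.

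For $p > 3$ the factor $6$ is a unit, so I would solve the system given by these six minors together with the defining equations $P = Q = 0$ and show it has no solution in $\mathbf{P}^3$. The argument is a short case analysis: the three minors $x^2 w = x^2 y = x^2 z = 0$ force either $x = 0$ or $y = z = w = 0$, and the latter forces $x=0$ on $C_p$ as well, hence the excluded zero point; so in every case $x = 0$. Setting $x = 0$ I would then split on whether $z = 0$. If $z = 0$, then $Q = 2yw = 0$ together with $y^3 = w^3$ forces $y = w = 0$; if $z \neq 0$, the minors $y^2 z = w^2 z = 0$ give $y = w = 0$, whereupon $Q = z^2 = 0$ contradicts $z \neq 0$. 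Either branch eliminates every candidate, so $C_p$ is non-singular.

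For $p = 3$ I would instead observe that every entry of the top row of $J$ carries the factor $3$ and so vanishes identically; equivalently, $P = (x+y+w)^3$ is a perfect cube by the Frobenius. Hence $J$ has rank at most $1$ at every point, and since the intersection of two hypersurfaces in $\mathbf{P}^3$ is non-empty, every point of $C_p$ is singular, which settles this case.

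I expect the only delicate point to be the case analysis for $p > 3$: one must verify that combining the vanishing of the minors with \emph{both} defining equations genuinely closes off every candidate point, in particular that the branches $z = 0$ and $z \neq 0$ are each killed by the equation $Q = 0$ and not merely by the minors. The characteristic $3$ case and the complete-intersection verification are routine by comparison. \qed
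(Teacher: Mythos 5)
Your proof is correct and shares its skeleton with the paper's: both apply the Jacobian criterion to the same $2\times 4$ matrix, work with the same six minors ($x^2w$, $x^2z$, $x^2y$, $y^2z$, $y^3-w^3$, $zw^2$, up to the unit $6$), and dispose of $p=3$ by noting that the top row vanishes identically. The difference lies in how you certify, for $p>3$, that the minors together with $P=Q=0$ have no common projective zero. The paper gives explicit polynomial identities expressing $x^5$, $2y^4$, $z^5$ and $2w^4$ as combinations of $P$, $Q$ and the minors, so that $x,y,z,w$ lie in the radical of $\langle P,Q,J(P,Q)\rangle$ and the common zero locus reduces to the irrelevant point; you instead run a point-wise case analysis (first forcing $x=0$, then splitting on $z=0$ versus $z\neq 0$). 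Your case analysis is complete and correct --- each branch is genuinely closed off using both defining equations, exactly at the points you flagged as delicate --- and it is arguably the more readable argument, while the paper's identities constitute an explicit Nullstellensatz-type certificate that can be verified mechanically, line by line, without any branching. One small simplification available to you: for the non-singularity direction ($p>3$) the complete-intersection caveat you raise is not actually needed, since a rank-$2$ Jacobian at a point of $V(Q,P)$ by itself forces the local dimension there to be one and the point to be smooth; the codimension hypothesis only matters for the converse direction, i.e., at $p=3$, where, as you say, non-emptiness of the intersection plus the identically vanishing row settles the claim.
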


\begin{proof}
Let $J (P, Q)$ denote the set of all the minors of degree $2$ of the Jacobian matrix
\[
\left( \begin{array}{cccc}
	\frac{\partial P}{\partial x} & \frac{\partial P}{\partial y} & \frac{\partial P}{\partial z} & \frac{\partial P}{\partial w} \\
	\frac{\partial Q}{\partial x} & \frac{\partial Q}{\partial y} & \frac{\partial Q}{\partial z} & \frac{\partial Q}{\partial w} 
	\end{array} \right)
	=
	\left( \begin{array}{cccc}
	3 x^2 & 3 y^2 & 0 & 3 w^2 \\
	0 & 2 w & 2 z & 2 y
	\end{array} \right) .
\]
Namely, the set $J (P, Q)$ consists of the following $6$ elements:
\begin{eqnarray}
f_1 & := & \frac{\partial P}{\partial x} \cdot \frac{\partial Q}{\partial y} -  \frac{\partial P}{\partial y} \cdot \frac{\partial Q}{\partial x} = 6 x^2 w, \nonumber \\
f_2 & := & \frac{\partial P}{\partial x} \cdot \frac{\partial Q}{\partial z} -  \frac{\partial P}{\partial z} \cdot \frac{\partial Q}{\partial x} = 6 x^2 z, \nonumber \\
f_3 & := & \frac{\partial P}{\partial x} \cdot \frac{\partial Q}{\partial w} -  \frac{\partial P}{\partial w} \cdot \frac{\partial Q}{\partial x} = 6 x^2 y, \nonumber \\
f_4 & := & \frac{\partial P}{\partial y} \cdot \frac{\partial Q}{\partial z} -  \frac{\partial P}{\partial z} \cdot \frac{\partial Q}{\partial y} = 6 y^2 z, \nonumber \\
f_5 & := & \frac{\partial P}{\partial y} \cdot \frac{\partial Q}{\partial w} -  \frac{\partial P}{\partial w} \cdot \frac{\partial Q}{\partial y} = 6 y^3 - 6 w^3, \nonumber \\
f_6 & := & \frac{\partial P}{\partial z} \cdot \frac{\partial Q}{\partial w} -  \frac{\partial P}{\partial w} \cdot \frac{\partial Q}{\partial z} = - 6 z w^2. \nonumber 
\end{eqnarray}
Assume $p > 3$.
It suffices to show that $x$, $y$, $z$ and $w$ belong to the radical of the ideal generated by $P$, $Q$ and $J (P, Q)$.
By straightforward computations, we have
\begin{eqnarray}
x^2 P - (6^{-1} y^2) f_3 - (6^{-1} w^2) f_1 & = & x^5, \nonumber \\
y P - (6^{-1} x) f_3 - (6^{-1} y) f_5 & = & 2 y^4, \nonumber \\
(-2 y z w + z^3) Q + (2 \cdot 3^{-1} w^2 ) f_4 & = & z^5, \nonumber \\
w P - (6^{-1} x) f_1 - (6^{-1} w) f_5 & = & 2 w^4, \nonumber
\end{eqnarray}
which belong to the ideal $\langle P, Q, J (P,Q) \rangle$ in $K[x,y,z,w]$.
Thus, $x$, $y$, $z$ and $w$ belong to its radical.

If $p=3$, then $J ( P, Q ) = \{ 0 \}$, and hence all the points on $V(Q,P)$ are singular points.
\end{proof}

In the following, we suppose $p > 3$.
It is shown in \cite{KH16} that we can decide whether $C_p$ is superspecial or not by computing the coefficients of certain monomials in $(Q P)^{p-1}$.

\begin{prop}[\cite{KH16}, Corollary 3.1.6]\label{cor:HW}
With notation as above, the curve $C_p$ is superspecial if and only if the coefficients of all the following $16$ monomials of degree $5 (p-1)$ in $(Q P)^{p-1}$ are zero:
\begin{equation}
\begin{array}{cccc}
( x^2 y z w )^{p-1}, & x^{2 p-1} y^{p-2} z^{p-1} w^{p-1}, & x^{2 p-1} y^{p-1} z^{p - 2} w^{p -1}, &  x^{2 p -1} y^{p-1} z^{p - 1} w^{p -2}, \\
x^{p-2} y^{2 p-1} z^{p-1} w^{p-1}, & ( x y^2 z w )^{p-1} , & x^{p-1} y^{2 p-1} z^{p - 2} w^{p -1}, &  x^{p -1} y^{2 p-1} z^{p - 1} w^{p -2}, \\
x^{p-2} y^{p-1} z^{2 p - 1} w^{p -1}, & x^{p-1} y^{p-2} z^{2 p-1} w^{p-1}, & ( x y z^2 w )^{p-1} , &  x^{p -1} y^{p-1} z^{2 p - 1} w^{p -2}, \\
 x^{p -2} y^{p-1} z^{p - 1} w^{2 p -1}, & x^{p-1} y^{p-2} z^{p-1} w^{2 p-1}, & x^{p-1} y^{p-1} z^{p - 2} w^{2 p -1}, & ( x y z w^2 )^{p-1} .
\end{array} \nonumber
\end{equation}
\end{prop}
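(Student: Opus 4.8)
The plan is to recognize this proposition as the explicit form, for the $(2,3)$-complete intersection $C_p=V(Q,P)$, of the general criterion that a curve is superspecial exactly when its Hasse--Witt matrix vanishes. First I would reduce the geometric condition to linear algebra. By the standard criterion going back to Ekedahl and Nygaard, an abelian variety over $\overline K$ is superspecial if and only if its $a$-number equals its dimension, equivalently the $p$-linear Frobenius on $H^1$ of the structure sheaf is identically zero. Applied to the Jacobian of $C_p$, this says that $C_p$ is superspecial if and only if the Hasse--Witt matrix $A$ representing Frobenius on $H^1(C_p,\mathcal{O}_{C_p})$ is the zero matrix. Since $C_p$ is a nonsingular complete intersection of a quadric and a cubic in $\mathbf{P}^3$ (nonsingularity for $p>3$ being Lemma~\ref{lem:sing}), it is a canonically embedded curve of genus $4$: indeed $\omega_{C_p}\cong\mathcal{O}_{C_p}(\deg Q+\deg P-4)=\mathcal{O}_{C_p}(1)$, so $H^0(C_p,\omega_{C_p})\cong H^0(\mathbf{P}^3,\mathcal{O}(1))=\langle x,y,z,w\rangle$ is $4$-dimensional. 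Hence $A$ is a $4\times 4$ matrix, and the task becomes identifying its $16$ entries with the $16$ listed coefficients.

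Second, I would compute $A$ directly from the equations. Resolving $\mathcal{O}_{C_p}$ by the Koszul complex of $(Q,P)$ and using Serre duality $H^1(C_p,\mathcal{O}_{C_p})\cong H^0(C_p,\omega_{C_p})^{\vee}$, one represents classes by Poincar\'e--Grothendieck residues $\mathrm{Res}\,\dfrac{m\,\Omega}{QP}$, where $\Omega=\sum_i(-1)^i x_i\,dx_0\wedge\cdots\widehat{dx_i}\cdots\wedge dx_3$ and $m$ runs over the basis $\{x,y,z,w\}$ of the degree-$1$ piece of $S/(Q,P)$ with $S=\overline K[x,y,z,w]$. Frobenius is the $p$-th power map; to express it back in this residue model I would write $\dfrac{m}{QP}=\dfrac{m\,(QP)^{p-1}}{(QP)^{p}}$ and use that the Cartier operator $\mathcal{C}$ is $p^{-1}$-linear with $\mathcal{C}(\lambda^{p}\eta)=\lambda\,\mathcal{C}(\eta)$, so the $p$-th-power denominator $(QP)^{p}$ passes outside. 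This reduces the action of Frobenius to a purely combinatorial coefficient-extraction operator applied to the single polynomial $(QP)^{p-1}$: one keeps the monomials whose exponent vector is congruent to $(p-1,\dots,p-1)$ modulo $p$ (after absorbing the degrees carried by $\Omega$) and reads off the resulting basis element.

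Carrying out this bookkeeping, the $(i,j)$ entry of $A$ becomes the coefficient in $(QP)^{p-1}$ of the monomial $M_{ij}:=(xyzw)^{p-1}\,x_i^{\,p}/x_j$, where $x_1,\dots,x_4$ denote $x,y,z,w$; the precise semilinear/transpose/$p$-th-root normalization is irrelevant, since only the vanishing of the entries matters. Each $M_{ij}$ is a genuine monomial of degree $4(p-1)+(p-1)=5(p-1)=\deg(QP)^{p-1}$, and letting $(i,j)$ range over all pairs reproduces exactly the $16$ monomials of the statement: the four diagonal choices $i=j$ give $(x_i^2\prod_{k\ne i}x_k)^{p-1}$, while $i\ne j$ raises the exponent of $x_i$ to $2p-1$ and lowers that of $x_j$ to $p-2$. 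Since $A=0$ is equivalent to the simultaneous vanishing of all $16$ coefficients, the proposition follows.

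The conceptual steps (superspecial $\Leftrightarrow A=0$, and $H^0(\omega_{C_p})=\langle x,y,z,w\rangle$) are standard, so the real work lies in the middle paragraph. I expect the \emph{main obstacle} to be setting up the double residue for a codimension-$2$ complete intersection and tracking the Cartier operator through it with the correct $(p-1)$-shift in the exponents, so as to verify that Frobenius corresponds to extracting precisely the $M_{ij}$-coefficients rather than some shifted variant. Once that identification is pinned down, matching it against the displayed list of monomials is routine.
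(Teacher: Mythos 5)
You should note at the outset that this paper never proves the proposition: it is imported verbatim from \cite[Corollary 3.1.6]{KH16} and used as a black box, so the only meaningful comparison is with the proof in that reference. Your outline is correct and is, up to Serre duality, the same argument. Kudo and Harashita work with Frobenius on $H^1(C_p,\mathcal{O}_{C_p})$ rather than with the Cartier operator on $H^0(C_p,\omega_{C_p})$: the Koszul resolution $0\to\mathcal{O}(-5)\to\mathcal{O}(-2)\oplus\mathcal{O}(-3)\to\mathcal{O}\to\mathcal{O}_{C_p}\to 0$ identifies $H^1(C_p,\mathcal{O}_{C_p})$ with $H^3(\mathbf{P}^3,\mathcal{O}(-5))$, whose \v{C}ech basis consists of the four classes $(x^2yzw)^{-1}$, $(xy^2zw)^{-1}$, $(xyz^2w)^{-1}$, $(xyzw^2)^{-1}$; the $p$-power map sends this space into $H^3(\mathbf{P}^3,\mathcal{O}(-5p))$, and multiplication by $(QP)^{p-1}$ (which is the comparison map between the Koszul complexes of $(Q^p,P^p)$ and of $(Q,P)$) brings it back, so the $(i,j)$ entry of the Hasse--Witt matrix is literally the coefficient of $m_j^p/m_i$ in $(QP)^{p-1}$ with $m_i,m_j$ ranging over $x^2yzw,\dots,xyzw^2$ --- exactly your sixteen $M_{ij}$. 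Combined with the criterion you state correctly (superspecial $\Leftrightarrow$ Hasse--Witt matrix $=0$), this yields the proposition. What the \v{C}ech/Koszul route buys, compared with yours, is precisely the step you yourself flag as the main obstacle: it needs no codimension-two Poincar\'e--Grothendieck residues and no tracking of the Cartier operator through them, only functoriality of Koszul complexes and the standard \v{C}ech description of $H^3$ of line bundles on $\mathbf{P}^3$. If you insist on the dual formulation, then the residue--Cartier compatibility (that $\mathcal{C}$ applied to $\mathrm{Res}\,\frac{x_j\,(QP)^{p-1}\Omega}{(QP)^{p}}$ extracts exactly the monomials of $x_j(QP)^{p-1}$ with all exponents $\equiv p-1 \pmod p$, which are the four monomials $(xyzw)^{p-1}x_i^{p}$) is the one genuine lemma your plan still owes a proof; your combinatorial endgame, including the identification of the sixteen monomials with the displayed list, is exactly right.
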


To prove Theorem \ref{thm:main} stated in Section \ref{sec:intro} (and in Section \ref{sec:main}), we compute the $16$ coefficients given in Proposition \ref{cor:HW}.
Note that we have $Q P = x^3 z^2 + y^3 z^2 + 2 x^3 y w + 2 y^4 w + z^2 w^3 + 2 y w^4$, and
\begin{eqnarray}
(Q P)^{p-1} & = & 
\sum_{a + b + c + d + e + f = p-1} \binom{p-1}{a,b,c,d,e,f} ( x^3 z^2 )^{a} (y^3 z^2)^b (2 x^3 y w)^c  (2 y^4 w)^d (z^2 w^3)^e (2 y w^4)^f \nonumber \\
& = & \sum_{a + b + c + d + e + f = p-1} \binom{p-1}{a,b,c,d,e,f} ( x^{3 a} z^{2 a} ) (y^{3 b} z^{2 b}) (2^c x^{3 c} y^c w^c)  (2^d y^{4 d} w^d) (z^{2 e} w^{3 e} ) (2^f y^f w^{4 f}) \nonumber \\
& = & \sum_{a + b + c + d + e + f = p-1} 2^{c+d+f} \cdot \binom{p-1}{a,b,c,d,e,f} x^{3 a + 3 c} y^{3 b + c + 4 d + f} z^{2 a + 2 b + 2 e} w^{c + d+ 3 e + 4 f} \label{eq:QP}
\end{eqnarray}
by the multinomial theorem.
To express $(QP)^{p-1}$ as a sum of the form
\[
(Q P)^{p-1} = \sum_{(i,j,k,\ell) \in \left( \mathbb{Z}_{\geq 0} \right)^{\oplus 4}} c_{i,j,k,\ell} x^i y^j z^k w^{\ell},
\]
we consider the linear system
\begin{eqnarray}
  \left\{
    \begin{array}{l}
     a + b + c + d + e + f = p-1, \\
     3 a + 3 c  = i, \\
     3 b + c + 4 d + f = j,  \\
     2 a + 2 b + 2 e = k,  \\
     c + d + 3 e + 4 f = \ell , 
    \end{array}
  \right. \label{eq:system}
\end{eqnarray}
and put
\begin{eqnarray}
S (i,j,k,\ell) := \{ (a, b, c, d, e, f) \in [0,p-1]^{\oplus 6} : (a,b,c,d,e,f) \mbox{ satisfies } \eqref{eq:system} \} \label{eq:sol_set}
\end{eqnarray}
for each $(i,j,k,\ell) \in \left( \mathbb{Z}_{\geq 0} \right)^{\oplus 4}$.
Using the notation $S (i,j,k,\ell)$, we have
\begin{eqnarray}
(Q P)^{p-1} = \sum_{(i,j,k,\ell) \in \left( \mathbb{Z}_{\geq 0} \right)^{\oplus 4}} \left( \sum_{(a, b, c, d, e, f) \in S(i,j,k,\ell )} 2^{c+d+f} \cdot \binom{p-1}{a,b,c,d,e,f} \right) x^{i} y^{j} z^{k} w^{\ell }. \label{eq:sum}
\end{eqnarray}

\begin{lem}\label{lem:coeff_zero}
With notation as above, the coefficients of the monomials $x^i y^j z^{p-2} w^{\ell}$ and $x^i y^j z^{2 p-1} w^{\ell}$ in $(Q P)^{p-1}$ are zero for all $(i,j,\ell) \in \left( \mathbb{Z}_{\geq 0} \right)^{\oplus 3}$.
\end{lem}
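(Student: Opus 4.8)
The plan is to exploit the parity of the $z$-exponent in the multinomial expansion \eqref{eq:QP}. Looking at the linear system \eqref{eq:system}, the fourth equation reads $2a + 2b + 2e = k$, so the exponent of $z$ in every monomial appearing in $(QP)^{p-1}$ is forced to be \emph{even}. This is structural rather than accidental: among the six terms of $QP$ recorded just above \eqref{eq:QP}, the only ones carrying $z$ are $x^3 z^2$, $y^3 z^2$, and $z^2 w^3$, each of which contributes $z$ to an even power. Hence any product of these generators, and in particular every term of $(QP)^{p-1}$, has even $z$-degree.

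From this single observation the lemma is immediate. Since $p > 3$ is an odd prime, both $p-2$ and $2p-1$ are odd integers. Thus for every choice of $(i,j,\ell)$ the prescribed $z$-exponent $k = p-2$ (respectively $k = 2p-1$) is odd and therefore cannot be written as $2a + 2b + 2e$. Consequently the solution set $S(i,j,p-2,\ell)$ (respectively $S(i,j,2p-1,\ell)$) defined in \eqref{eq:sol_set} is empty.

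Finally, I would invoke the expression \eqref{eq:sum}: the coefficient of $x^i y^j z^k w^\ell$ is the sum of $2^{c+d+f}\binom{p-1}{a,b,c,d,e,f}$ over $(a,b,c,d,e,f) \in S(i,j,k,\ell)$, which is an empty sum whenever $S(i,j,k,\ell) = \emptyset$ and hence equals zero. This yields the vanishing of the coefficients of $x^i y^j z^{p-2} w^\ell$ and $x^i y^j z^{2p-1} w^\ell$ for all $(i,j,\ell)$.

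I do not anticipate any real obstacle here: the entire content of the lemma is the parity of $k$. The only point deserving a moment's care is the verification that each $z$-bearing term of $QP$ supplies an \emph{even} power of $z$, so that no cancellation or correction is needed; this is transparent from the explicit form of $QP$ and is exactly what makes the $z$-degree always even.
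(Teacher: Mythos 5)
Your proof is correct and follows exactly the paper's argument: the $z$-exponent of every monomial in $(QP)^{p-1}$ is $2a+2b+2e$, hence even, while $p-2$ and $2p-1$ are odd for odd $p$, so the coefficients vanish. The only difference is cosmetic — you phrase the conclusion via emptiness of $S(i,j,k,\ell)$ and the empty sum in \eqref{eq:sum}, which the paper leaves implicit.
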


\begin{proof}
Recall from \eqref{eq:QP} that the $z$-exponent of each monomial in $(Q P)^{p-1}$ is $2 a + 2 b + 2 e$, which is an even number.
On the other hand, the $z$-exponents of the monomials $x^i y^j z^{p-2} w^{\ell}$ and $x^i y^j z^{2 p-1} w^{\ell}$ are odd numbers, and thus their coefficients in $(Q P)^{p-1}$ are all zero.
\end{proof}


Let $\mathcal{M}$ be the set of the $16$ monomials given in Proposition \ref{cor:HW}, and set
\[
E (\mathcal{M}) := \{ (i,j,k,\ell ) \in \left( \mathbb{Z}_{\geq 0} \right)^{\oplus 4} : x^i y^j z^k w^{\ell} = m \mbox{ for some } m \in \mathcal{M} \} ,
\]
which is the set of the exponent vectors of the monomials in $\mathcal{M}$.

\begin{lem}\label{lem:coeff-1}
Assume $p \equiv 2 \pmod{3}$.
Then we have $S (i,j,k, \ell) = \emptyset$ for any $(i,j,k,\ell) \in E(\mathcal{M})$.
\end{lem}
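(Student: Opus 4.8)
The plan is to read off from the linear system \eqref{eq:system} a handful of necessary conditions on the target exponent vector $(i,j,k,\ell)$, and then to verify that every vector in $E(\mathcal{M})$ violates at least one of them, so that the corresponding solution set is forced to be empty. Suppose $(a,b,c,d,e,f) \in S(i,j,k,\ell)$. The fourth equation of \eqref{eq:system} reads $k = 2(a+b+e)$, so $k$ must be even; the second equation reads $i = 3(a+c)$, so $3 \mid i$; and reducing the third and fifth equations modulo $3$, where $3b$ and $3e$ vanish and $4 \equiv 1 \pmod 3$, gives $j \equiv c+d+f \equiv \ell \pmod 3$, so $j \equiv \ell \pmod 3$. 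Consequently, if a vector $(i,j,k,\ell)$ fails any one of the three tests ``$k$ even'', ``$3 \mid i$'', ``$j \equiv \ell \pmod 3$'', then $S(i,j,k,\ell) = \emptyset$.

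The second step is a finite check of these three tests against the sixteen vectors in $E(\mathcal{M})$, for which it is convenient to record in advance the residues and parities of the four numbers that occur as coordinates. Since $p$ is odd, $p-1$ and $2p-2$ are even while $p-2$ and $2p-1$ are odd; and since $p \equiv 2 \pmod 3$, we have $p-2 \equiv 2p-1 \equiv 0$, $p-1 \equiv 1$, and $2p-2 \equiv 2 \pmod 3$. With this dictionary the parity test on $k$ eliminates the six monomials whose $z$-exponent is $p-2$ or $2p-1$ (which, at the level of solution sets, is the observation behind Lemma \ref{lem:coeff_zero}); of the remaining ten, the divisibility test on $i$ eliminates the six whose $x$-exponent is $p-1$ or $2p-2$; and the last four, which all have $x$-exponent in $\{p-2,2p-1\}$ and $z$-exponent $p-1$, have $\{j,\ell\}$ equal to $\{p-2,p-1\}$ or $\{p-1,2p-1\}$ up to order, hence $j \not\equiv \ell \pmod 3$. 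A compact table over the sixteen vectors makes all of this visible at once.

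The one point that requires a little thought, rather than pure bookkeeping, is isolating the third test $j \equiv \ell \pmod 3$. The parity of $k$ and the divisibility of $i$ are each visible in a single equation of \eqref{eq:system}, and they already dispose of twelve of the sixteen vectors; but the four survivors --- the vectors obtained from a balanced monomial by shifting one unit between the $x$-slot and the $y$- or $w$-slot --- are invisible to those two filters and are separated only by comparing the third and fifth equations. It is precisely the coincidence $4 \equiv 1 \pmod 3$ in both equations that forces $j$ and $\ell$ to share the residue $c+d+f$, and recognizing this is what reduces the whole lemma to the finite verification above. I therefore expect no analytic difficulty, only the need to organize the case check so that all sixteen vectors are transparently covered.
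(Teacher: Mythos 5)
Your proof is correct and takes essentially the same approach as the paper: both arguments extract the necessary conditions ``$k$ even'', ``$3 \mid i$'', and ``$j \equiv \ell \pmod{3}$'' from the system \eqref{eq:system} and then eliminate all sixteen vectors of $E(\mathcal{M})$ by a finite check, ending with the same four surviving candidates killed by $j \not\equiv \ell \pmod{3}$. The only cosmetic difference is that you read the three congruences directly off individual equations of \eqref{eq:system}, whereas the paper derives them by row-reducing the extended coefficient matrix and then reducing modulo $3$ (and invokes Lemma \ref{lem:coeff_zero} for the parity condition).
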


\begin{proof}
Note that for each $(i,j,k,\ell) \in E(\mathcal{M})$, we have $i+j+k+\ell=5 (p-1)$, see Proposition \ref{cor:HW}.
Using matrices, we write the system \eqref{eq:system} as
\begin{eqnarray}
	\left( \begin{array}{cccccc}
	1 & 1 & 1 & 1 & 1 & 1 \\
	3 & 0 & 3 & 0 & 0 & 0 \\
	0 & 3 & 1 & 4 & 0 & 1 \\
	2 & 2 & 0 & 0 & 2 & 0 \\
	0 & 0 & 1 & 1 & 3 & 4
	\end{array} \right)
	\left( \begin{array}{c}
	a \\
	b \\
	c \\
	d \\
	e \\
	f 
	\end{array} \right)
	= \left( \begin{array}{c}
	p-1 \\
	i \\
	j \\
	k \\
	\ell \\
	\end{array} \right) , \label{eq:sys1-1}
\end{eqnarray}
whose extended coefficient matrix is transformed as follows:
\if 0
\[
	\left( \begin{array}{ccccccc}
	1 & 1 & 1 & 1 & 1 & 1 & p-1\\
	0 & 0 & 0 & 0 & 0 & 0 & i\\
	0 & 0 & 1 & 1 & 0 & 1 & j \\
	2 & 2 & 0 & 0 & 2 & 0 & k \\
	0 & 0 & 1 & 1 & 0 & 1 & \ell
	\end{array} \right)
	\longrightarrow
	\left( \begin{array}{ccccccc}
	1 & 1 & 1 & 1 & 1 & 1 & p-1\\
	0 & 0 & 1 & 1 & 0 & 1 & j \\
	0 & 0 & 0 & 0 & 0 & 0 & k + p - 1 - j \\
	0 & 0 & 0 & 0 & 0 & 0 & \ell - j\\
	0 & 0 & 0 & 0 & 0 & 0 & i
	\end{array} \right)
\]
\fi
\[
\left( \begin{array}{ccccccc}
	1 & 1 & 1 & 1 & 1 & 1 & p-1 \\
	3 & 0 & 3 & 0 & 0 & 0 & i\\
	0 & 3 & 1 & 4 & 0 & 1 & j\\
	2 & 2 & 0 & 0 & 2 & 0 & k\\
	0 & 0 & 1 & 1 & 3 & 4 & \ell
	\end{array} \right)
\longrightarrow
	\left( \begin{array}{ccccccc}
	1 & 1 & 1 & 1 & 1 & 1 & p-1\\
	0 & 3 & 1 & 4 & 0 & 1 & j \\
	0 & 0 & 1 & 1 & -3 & -2 & i + j- 3 (p - 1)  \\
	0 & 0 & 0 & 0 & 6 & 6 & \ell - (i + j - 3 (p-1) )\\
	0 & 0 & 0 & 0 & 0 & 0 & 0
	\end{array} \right)
\]
Considering modulo $3$, we have the following linear system over $\mathbb{F}_3$:
\[
\left( \begin{array}{cccccc}
	1 & 1 & 1 & 1 & 1 & 1 \\
	0 & 0 & 1 & 1 & 0 & 1  \\
	0 & 0 & 1 & 1 & 0 & 1  \\
	0 & 0 & 0 & 0 & 0 & 0 \\
	0 & 0 & 0 & 0 & 0 & 0 
	\end{array} \right)
	\left( \begin{array}{c}
	a^{\prime} \\
	b^{\prime} \\
	c^{\prime} \\
	d^{\prime} \\
	e^{\prime} \\
	f^{\prime} 
	\end{array} \right)
	= \left( \begin{array}{c}
	p-1 \\
	j \\
	i+j \\
    \ell- (i+j)  \\
	0 \\
	\end{array} \right) ,
\]
which is equivalent to
\begin{eqnarray}
\left( \begin{array}{cccccc}
	1 & 1 & 1 & 1 & 1 & 1 \\
	0 & 0 & 1 & 1 & 0 & 1  \\
	0 & 0 & 0 & 0 & 0 & 0  \\
	0 & 0 & 0 & 0 & 0 & 0 \\
	0 & 0 & 0 & 0 & 0 & 0 
	\end{array} \right)
	\left( \begin{array}{c}
	a^{\prime} \\
	b^{\prime} \\
	c^{\prime} \\
	d^{\prime} \\
	e^{\prime} \\
	f^{\prime} 
	\end{array} \right)
	= \left( \begin{array}{c}
	p-1 \\
	j \\
	i \\
    \ell- (i+j)  \\
	0 \\
	\end{array} \right) . \label{eq:sys2}
\end{eqnarray}
Note that the system \eqref{eq:sys2} over $\mathbb{F}_3$ has a solution if and only if $i \equiv 0 \pmod{3}$ and $\ell \equiv j \pmod{3}$.
We claim that if $p \equiv 2 \pmod{3}$, the original system \eqref{eq:sys1-1} over $\mathbb{Z}$ has no solution in $[0,p-1]^{\oplus 6}$ for any $(i,j,k,\ell) \in E (\mathcal{M})$.
Indeed, if $p \equiv 2 \pmod{3}$ and if the system \eqref{eq:sys1-1} has a solution in $[0,p-1]^{\oplus 6}$ for some $(i,j,k,\ell) \in E (\mathcal{M})$, the system \eqref{eq:sys2} has a solution.
By Lemma \ref{lem:coeff_zero}, we may assume $k \neq p-2$ and $k \neq 2 p-1$, i.e., $k = 2 p -2$ or $k= p-1$.
Since $i \equiv 0 \pmod{3}$ and since $p \equiv 2 \pmod{3}$, the integer $i$ is equal to $2 p -1$ or $p-2$, and thus $(i,j,k,\ell) = (2 p -1, p-2, p-1, p-1)$, $(2p-1,p-1,p-1,p-2)$, $(p -2, 2p-1, p-1, p-1)$ or $(p-2,p-1,p-1,2p-1)$.
However, any of the above four candidates for $(i,j,k,\ell)$ does not satisfy $\ell \equiv j \pmod{3}$, which is a contradiction.
\end{proof}

\begin{prop}\label{prop:main1}
Assume $p \equiv 2 \pmod{3}$.
Then the curve $C_p = V(Q,P)$ is superspecial.
\end{prop}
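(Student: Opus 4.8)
The plan is to deduce Proposition \ref{prop:main1} by assembling the Hasse--Witt criterion of Proposition \ref{cor:HW} with the emptiness statement of Lemma \ref{lem:coeff-1}, so that almost all of the work is already done. First I would record that the criterion of Proposition \ref{cor:HW} is applicable: by Lemma \ref{lem:sing}, for $p>3$ the variety $C_p = V(Q,P)$ is non-singular, and being a complete intersection of a quadric and a cubic in $\mathbf{P}^3$, its genus is $\tfrac{1}{2}\cdot 2\cdot 3\cdot(2+3-4)+1 = 4$ by the adjunction (complete-intersection genus) formula. Hence $C_p$ is a non-singular curve of genus $4$, and superspeciality is equivalent to the vanishing of the coefficients of the $16$ monomials in $\mathcal{M}$ in $(QP)^{p-1}$.

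Next I would translate that criterion into the language of the solution sets $S(i,j,k,\ell)$. By the expansion \eqref{eq:sum}, for each exponent vector $(i,j,k,\ell)$ the coefficient of $x^i y^j z^k w^{\ell}$ in $(QP)^{p-1}$ is precisely
\[
c_{i,j,k,\ell} \;=\; \sum_{(a,b,c,d,e,f)\in S(i,j,k,\ell)} 2^{c+d+f}\binom{p-1}{a,b,c,d,e,f}.
\]
Thus the coefficient attached to each monomial $m\in\mathcal{M}$ is a sum indexed by $S(i,j,k,\ell)$, where $(i,j,k,\ell)\in E(\mathcal{M})$ is the exponent vector of $m$.

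The decisive step is then immediate from Lemma \ref{lem:coeff-1}: since $p\equiv 2\pmod 3$, we have $S(i,j,k,\ell)=\emptyset$ for every $(i,j,k,\ell)\in E(\mathcal{M})$. Consequently each of the $16$ coefficients $c_{i,j,k,\ell}$ is an empty sum and therefore vanishes, irrespective of the multinomial coefficients or the powers of $2$ that would otherwise appear. Applying Proposition \ref{cor:HW}, I conclude that $C_p$ is superspecial. I do not expect any genuine obstacle at this stage, because all the combinatorial and linear-algebraic difficulty has already been absorbed into Lemma \ref{lem:coeff-1}; the only point that needs care is the bookkeeping verification that the hypotheses of Proposition \ref{cor:HW}---namely that $C_p$ is a smooth curve of genus $4$---are met, which is exactly what Lemma \ref{lem:sing} together with the complete-intersection genus formula supplies for $p>3$.
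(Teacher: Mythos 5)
Your proof is correct and follows essentially the same route as the paper: the paper's own proof of this proposition is exactly the two-step deduction of combining Lemma \ref{lem:coeff-1} (emptiness of $S(i,j,k,\ell)$ for all $(i,j,k,\ell)\in E(\mathcal{M})$) with the criterion of Proposition \ref{cor:HW}. Your additional verification of smoothness and the genus via the complete-intersection formula is fine, though the paper defers those hypotheses to Lemma \ref{lem:sing} and the proof of Theorem \ref{thm:main} rather than including them here.
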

\begin{proof}
It follows from Lemma \ref{lem:coeff-1} that the coefficient of $x^{i} y^{j} z^{k} w^{\ell}$ in \eqref{eq:sum} is zero for each $(i,j,k,\ell) \in E(\mathcal{M})$. 
By Proposition \ref{cor:HW}, the curve $V(Q,P)$ is superspecial.
\end{proof}

It follows from the proof of Lemma \ref{lem:coeff-1} that \eqref{eq:system} is equivalent to the following system:
\begin{eqnarray}
  \left\{
    \begin{array}{l}
     a + b + c + d + e + f = p-1, \\
     3 b + c + 4 d + f  = j,  \\
     c + d - 3 e - 2 f = i + j - 3 (p-1),  \\
     6 e + 6 f = \ell - (i + j - 3 (p-1) ).  
    \end{array}
  \right. \label{eq:system3}
\end{eqnarray}

Next, we consider the case of $p \equiv 1 \pmod{3}$.

\begin{lem}\label{lem:coeff}
Assume $p \equiv 1 \pmod{3}$.
Then we have $\# S (p-1, p-1, 2 p -2 , p-1) = 1$.
In other words, the system \eqref{eq:system3} with $(i,j,k,\ell) = (p-1,p-1,2 p - 2, p-1)$ has a unique solution in $[0,p-1]^{\oplus 6}$.
The solution is given by
\begin{eqnarray}
\left( \begin{array}{cccccc}
a, & b, & c, & d, & e, & f
\end{array} \right)
 = 
\left( \begin{array}{cccccc}
 (p-1)/3, & (p-1)/3, & 0, &  0, & (p-1)/3, & 0
 \end{array} \right) . \label{eq:sol}
\end{eqnarray}
\end{lem}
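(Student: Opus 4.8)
The plan is to work directly with the equivalent form \eqref{eq:system3} of the system, which is already triangular enough to read off its solutions. First I would substitute the target values $(i,j,k,\ell) = (p-1,p-1,2p-2,p-1)$ and simplify the right-hand sides. A short computation gives $i+j-3(p-1) = -(p-1)$ and $\ell - (i+j-3(p-1)) = 2(p-1)$, so \eqref{eq:system3} specializes to
\begin{align*}
a+b+c+d+e+f &= p-1, \\
3b+c+4d+f &= p-1, \\
c+d-3e-2f &= -(p-1), \\
6e+6f &= 2(p-1).
\end{align*}

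The crux is to combine the last two equations. The fourth equation reads $e+f = (p-1)/3$, which is a nonnegative integer precisely because $p \equiv 1 \pmod 3$; this is the only place the hypothesis on $p$ enters. Writing $-3e-2f = -3(e+f)+f = -(p-1)+f$ and inserting this into the third equation collapses it to $c+d+f = 0$. Since $c,d,f$ are nonnegative integers, this forces $c=d=f=0$. I stress that only the lower bounds $a,b,c,d,e,f \geq 0$ are used at this step, not the upper bound $p-1$.

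With $c=d=f=0$ the remaining equations determine the other three variables uniquely: the fourth gives $e = (p-1)/3$, the second gives $3b = p-1$ hence $b = (p-1)/3$, and the first gives $a = (p-1)/3$. This simultaneously proves uniqueness and produces the candidate \eqref{eq:sol}; substituting it back into the original system \eqref{eq:system} (or simply invoking the equivalence of \eqref{eq:system3} and \eqref{eq:system}) confirms existence. Finally, each coordinate is either $0$ or $(p-1)/3 \leq p-1$, so the solution lies in $[0,p-1]^{\oplus 6}$, giving $\# S(p-1,p-1,2p-2,p-1) = 1$. I do not expect a genuine obstacle here: the argument is forced once one observes that equations three and four together impose $c+d+f=0$, collapsing what is a priori a two-parameter family of rational solutions to a single lattice point; the only care needed is the sign bookkeeping in the reduction of the third equation.
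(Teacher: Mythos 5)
Your proposal is correct and follows essentially the same argument as the paper: both specialize \eqref{eq:system3} to the same four equations, rewrite $c+d-3e-2f$ as $c+d+f-3(e+f)$ to combine the third and fourth equations into $c+d+f=0$, conclude $c=d=f=0$ by nonnegativity, and then read off $a=b=e=(p-1)/3$. The only difference is that you spell out a few routine checks (integrality of $(p-1)/3$, membership in $[0,p-1]^{\oplus 6}$) that the paper leaves implicit.
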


\begin{proof}
The system to be solved with $(i,j,k,\ell) = (p-1,p-1,2 p - 2, p-1)$ is given by
\begin{numcases}
  {}
  a + b + c + d + e + f = p-1, & \label{eq:1}\\
  3 b + c + 4 d + f  = p-1, & \label{eq:2}\\
  c + d - 3 e - 2 f =  - (p-1), & \label{eq:3} \\
  6 e + 6 f = 2(p-1) & \label{eq:4} 
\end{numcases}
with $(a, b, c,d, e, f) \in [0,p-1]^{\oplus 6}$.
Since $c + d - 3 e - 2 f = c + d + f - (3 e + 3 f)$, it follows from \eqref{eq:3} and \eqref{eq:4} that $c + d + f = 0$, and thus $c=d=f=0$.
By \eqref{eq:2} and \eqref{eq:4}, we have $b = e = (p-1)/3$.
From \eqref{eq:1}, we also have $a = (p-1)/3$. 
\end{proof}

\begin{lem}\label{lem:coeff2}
Assume $p \equiv 1 \pmod{3}$.
Then the coefficient of the monomial $x^{p-1} y^{p-1} z^{2p-2} w^{p-1}$ in $(Q P)^{p-1}$ is not zero.
\end{lem}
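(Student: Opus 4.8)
The plan is to combine the uniqueness statement of Lemma \ref{lem:coeff} with the explicit expansion \eqref{eq:sum}. First I would recall from \eqref{eq:sum} that the coefficient of $x^{p-1} y^{p-1} z^{2p-2} w^{p-1}$ in $(Q P)^{p-1}$ is precisely
\[
\sum_{(a,b,c,d,e,f) \in S(p-1,p-1,2p-2,p-1)} 2^{c+d+f} \cdot \binom{p-1}{a,b,c,d,e,f} .
\]
By Lemma \ref{lem:coeff}, the index set $S(p-1,p-1,2p-2,p-1)$ is a singleton, its unique element being the solution \eqref{eq:sol}. Hence the sum collapses to a single term, and the entire problem reduces to evaluating one multinomial coefficient and checking that it is nonzero in $K$.

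Next I would substitute $(a,b,c,d,e,f) = ((p-1)/3,(p-1)/3,0,0,(p-1)/3,0)$. Since $c+d+f = 0$ we have $2^{c+d+f} = 1$, and using $0! = 1$ the coefficient reduces to
\[
\binom{p-1}{(p-1)/3,\,(p-1)/3,\,0,\,0,\,(p-1)/3,\,0} = \frac{(p-1)!}{\bigl(\,((p-1)/3)!\,\bigr)^3} .
\]
It then remains to show that this integer does not vanish when viewed as an element of $K$, i.e.\ that it is a $p$-adic unit.

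The key point, and the only thing requiring an argument, is that $(p-1)/3 < p$, so that no factorial in sight acquires a factor of $p$. Writing $m := (p-1)/3$, the numerator $(p-1)! = 1 \cdot 2 \cdots (p-1)$ is a product of integers all coprime to $p$, hence $p \nmid (p-1)!$; likewise every factor of $m!$ lies in $\{1,\dots,p-1\}$, so $p \nmid m!$. Since the displayed multinomial coefficient is an integer equal to $(p-1)!/(m!)^3$, its $p$-adic valuation is $0$, and therefore it is nonzero modulo $p$. (Alternatively, Wilson's theorem gives the clean value $(p-1)! \equiv -1 \pmod{p}$, so the coefficient equals $-(m!)^{-3} \not\equiv 0 \pmod{p}$.) I do not expect any genuine obstacle here: all the real content has been front-loaded into Lemma \ref{lem:coeff}, whose uniqueness assertion is exactly what turns the coefficient into a single, manifestly nonzero multinomial coefficient, leaving only the elementary unit check above.
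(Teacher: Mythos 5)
Your proof is correct and follows essentially the same route as the paper's: invoke Lemma \ref{lem:coeff} to collapse the sum in \eqref{eq:sum} to the single multinomial coefficient $(p-1)!/\bigl(((p-1)/3)!\bigr)^3$, then observe it is a $p$-adic unit. The paper states the final non-divisibility by $p$ without elaboration; your explicit valuation argument (and the Wilson's theorem aside) is a harmless expansion of that same step.
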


\begin{proof}
Let $c_{p-1,p-1,2p-2,p-1}$ be the coefficient of $x^{p-1} y^{p-1} z^{2p-2} w^{p-1}$ in $(Q P)^{p-1}$.
Recall from \eqref{eq:sum} that $c_{p-1,p-1,2p-2,p-1}$ is given by
\[
\sum_{(a,b,c,d,e,f) \in S(p-1,p-1,2p-2,p-1)} 2^{c+d+f} \cdot \binom{p-1}{a,b,c,d,e,f} ,
\]
where $S (p-1,p-1,2p-2,p-1)$ is defined in \eqref{eq:sol_set}.
By Lemma \ref{lem:coeff}, the set $S(p-1,p-1,2p-2,p-1)$ consists of only the element given by \eqref{eq:sol}, and hence
\[
c_{p-1,p-1,2 p-2,p-1} = \cfrac{(p-1)!}{\left( \cfrac{p-1}{3} \right) ! \left( \cfrac{p-1}{3} \right) ! \left( \cfrac{p-1}{3} \right) !},
\]
which is not divisible by $p$.
\end{proof}


\begin{prop}\label{prop:main2}
Assume $p \equiv 1 \pmod{3}$.
Then the curve $C_p = V(Q,P)$ is not superspecial.
\end{prop}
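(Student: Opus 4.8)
The plan is to deduce the result immediately from the superspeciality criterion of Proposition \ref{cor:HW} together with Lemma \ref{lem:coeff2}. Recall that Proposition \ref{cor:HW} asserts that $C_p$ is superspecial if and only if the coefficients in $(QP)^{p-1}$ of all $16$ monomials in $\mathcal{M}$ vanish. The strategy is therefore to exhibit a single monomial in $\mathcal{M}$ whose coefficient is nonzero, which by the necessity direction of that criterion rules out superspeciality.

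First I would identify the relevant monomial. Note that $x^{p-1}y^{p-1}z^{2p-2}w^{p-1} = (xyz^2w)^{p-1}$, and that $(xyz^2w)^{p-1}$ is precisely one of the $16$ monomials listed in Proposition \ref{cor:HW} (the third entry of the third row); equivalently, its exponent vector $(p-1,p-1,2p-2,p-1)$ belongs to $E(\mathcal{M})$. Next I would invoke Lemma \ref{lem:coeff2}, which under the hypothesis $p \equiv 1 \pmod 3$ already shows that the coefficient $c_{p-1,p-1,2p-2,p-1}$ of this monomial equals the multinomial coefficient $(p-1)!/\bigl(((p-1)/3)!\bigr)^3$ and is not divisible by $p$, hence nonzero in characteristic $p$. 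Combining these two facts finishes the argument in one line: at least one of the coefficients required by Proposition \ref{cor:HW} to vanish is in fact nonzero, so $C_p$ cannot be superspecial.

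I do not expect any serious obstacle here, since the substantive work has already been carried out in Lemma \ref{lem:coeff} (uniqueness of the solution of the linear system for this exponent vector) and Lemma \ref{lem:coeff2} (the nonvanishing of the resulting multinomial coefficient). The only point requiring care is the bookkeeping check that the monomial produced by those lemmas is genuinely a member of the test set $\mathcal{M}$ rather than an auxiliary monomial; once this membership is confirmed, the proposition is simply the contrapositive payoff of Proposition \ref{cor:HW}.
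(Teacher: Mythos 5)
Your proposal is correct and is essentially identical to the paper's own proof: both invoke Lemma \ref{lem:coeff2} to get the nonvanishing of the coefficient of $x^{p-1}y^{p-1}z^{2p-2}w^{p-1} = (xyz^2w)^{p-1}$, which is one of the $16$ monomials of Proposition \ref{cor:HW}, and conclude non-superspeciality from that criterion. Your extra remark verifying membership of this monomial in $\mathcal{M}$ is a sensible bookkeeping check, but otherwise the arguments coincide.
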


\begin{proof}
It follows from Lemma \ref{lem:coeff2} that the coefficient of $x^{p-1} y^{p-1} z^{2p-2} w^{p-1}$ in $(Q P)^{p-1}$ is not zero.
By Proposition \ref{cor:HW}, the curve $V(Q,P)$ is not superspecial.
\end{proof}

\section{Proofs of main results and some further problems}\label{sec:main}
As in the previous section, let $K$ be a perfect field of characteristic $p>2$.
Here, we re-state Theorem \ref{thm:main} and Corollary \ref{cor:main} in Section \ref{sec:intro} and prove them:

\begin{thm}\label{thm:main}
Put $Q:= 2 y w + z^2$ and $P := x^3+y^3+w^3$.
Let $C_p = V(Q, P)$ denote the projective zero-locus in $\mathbf{P}^3 = \mathrm{Proj}(\overline{K}[x,y,z,w])$ defined by $Q=0$ and $P=0$.
Then $C_p$ is a superspecial nonhyperelliptic curve of genus $4$ if and only if $p \equiv 2 \pmod{3}$.
\end{thm}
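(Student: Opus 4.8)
The plan is to treat the theorem as an assembly of the results of Section \ref{sec:pre}, supplemented by the geometric input that $C_p$ is a nonhyperelliptic curve of genus $4$ whenever it is smooth. First I would separate the residue classes: since $p > 2$ is prime, either $p = 3$ (the only case with $p \equiv 0 \pmod 3$) or $p \equiv 1, 2 \pmod 3$. For $p = 3$, Lemma \ref{lem:sing} shows $C_p$ is singular, so it is not a curve and both sides of the asserted equivalence are false (note $3 \not\equiv 2 \pmod 3$); this case is therefore vacuous. For $p > 3$, Lemma \ref{lem:sing} guarantees that $C_p$ is nonsingular, and here $p \equiv 1$ or $2 \pmod 3$.

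Assuming $p > 3$, I would next identify the geometric type of $C_p$. Since $C_p = V(Q,P)$ is a smooth complete intersection of the quadric $Q$ and the cubic $P$ in $\mathbf{P}^3$, the adjunction formula for complete intersections yields $\omega_{C_p} \cong \mathcal{O}_{C_p}(\deg Q + \deg P - 4) = \mathcal{O}_{C_p}(1)$. Taking degrees gives $2g - 2 = \deg Q \cdot \deg P = 6$, whence $g = 4$. Because the canonical bundle $\omega_{C_p}$ coincides with the hyperplane bundle $\mathcal{O}_{C_p}(1)$, which is very ample as $C_p$ is embedded in $\mathbf{P}^3$ via it, the canonical bundle is itself very ample, so the inclusion $C_p \hookrightarrow \mathbf{P}^3$ is the canonical embedding. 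Since the canonical bundle of a hyperelliptic curve of genus at least $2$ is never very ample, $C_p$ is nonhyperelliptic. This step --- the classical identification of a smooth $(2,3)$ complete intersection in $\mathbf{P}^3$ with a canonically embedded genus-$4$ curve --- is the only geometric content not already furnished in Section \ref{sec:pre}, and I expect it to be the most delicate point to state cleanly; fortunately both the adjunction formula and the characterization of hyperelliptic curves by the non-very-ampleness of their canonical bundle hold in every characteristic.

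Finally I would combine the superspeciality dichotomy with the geometric facts. For $p \equiv 2 \pmod 3$, Proposition \ref{prop:main1} shows $C_p$ is superspecial, and by the preceding paragraph it is a nonhyperelliptic curve of genus $4$; thus $p \equiv 2 \pmod 3$ implies the conclusion. Conversely, if $C_p$ is a superspecial nonhyperelliptic curve of genus $4$, then in particular it is superspecial; Proposition \ref{prop:main2} rules out $p \equiv 1 \pmod 3$, and the case $p = 3$ is excluded because $C_p$ is then singular, so $p \equiv 2 \pmod 3$. The superspeciality equivalence having already been established in Propositions \ref{prop:main1} and \ref{prop:main2}, the remaining work amounts to this bookkeeping across the residues of $p$ modulo $3$ together with the genus-$4$ identification above.
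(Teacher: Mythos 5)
Your proposal is correct and takes essentially the same route as the paper's proof: dispose of $p=3$ and secure nonsingularity for $p>3$ via Lemma \ref{lem:sing}, identify $C_p$ as a nonhyperelliptic curve of genus $4$, and then read off the superspeciality dichotomy from Propositions \ref{prop:main1} and \ref{prop:main2}. The only difference is that you prove the identification of a smooth $(2,3)$-complete intersection in $\mathbf{P}^3$ as a canonically embedded genus-$4$ nonhyperelliptic curve directly by adjunction, whereas the paper simply cites \cite{KH16} (Section 2) for this classical fact.
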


\begin{proof}
Recall from Lemma \ref{lem:sing} that $C_p$ is singular if $p=3$, and non-singular if $p>3$.
We may assume $p>3$.
Since $C_p$ is the set of the zeros of the quadratic form $Q$ and the cubic form $P$ over $K$, it is a nonhyperelliptic curve of genus $4$ over $K$, see \cite[Section 2]{KH16}.
It follows from Propositions \ref{prop:main1} and \ref{prop:main2} that the non-singular curve $C_p$ is superspecial if and only if $p \equiv 2 \pmod{3}$.
\end{proof}

\begin{cor}\label{cor:main}
There exist superspecial nonhyperelliptic curves of genus $4$ in characteristic $p$ for infinitely many primes $p$.
The set of primes $p$ for which $C_p$ is superspecial has natural density $1/2$. 
\end{cor}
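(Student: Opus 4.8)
The plan is to deduce Corollary \ref{cor:main} directly from Theorem \ref{thm:main}, reducing the entire statement to an elementary counting question about primes in a single arithmetic progression. By Theorem \ref{thm:main}, the curve $C_p$ is a superspecial nonhyperelliptic curve of genus $4$ exactly when $p \equiv 2 \pmod{3}$. Hence the set of primes named in the corollary coincides with $\{ p : p \equiv 2 \pmod{3} \}$, the finitely many excluded small primes ($p = 2$ and $p = 3$) playing no role in either assertion.

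First I would invoke Dirichlet's theorem on primes in arithmetic progressions. Since $\gcd(2,3) = 1$, the progression $\{ 3k + 2 : k \geq 0 \}$ contains infinitely many primes, which immediately yields the existence of superspecial nonhyperelliptic curves of genus $4$ in characteristic $p$ for infinitely many primes $p$. For the density assertion I would appeal to the prime number theorem for arithmetic progressions: the residue classes modulo $3$ coprime to $3$ are precisely $1$ and $2$, so there are $\phi(3) = 2$ of them, and the primes distribute asymptotically equally among these two classes. Therefore the natural density of $\{ p : p \equiv 2 \pmod{3} \}$ is $1/\phi(3) = 1/2$, giving the second assertion. This also parallels the $g = 1$ case recalled in the introduction, where $E_p : y^2 = x^3 + 1$ is supersingular on exactly this same set of primes.

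Since the corollary is an essentially immediate consequence of Theorem \ref{thm:main} combined with these two classical inputs, I do not anticipate any genuine obstacle. The only substantive ingredient is the equidistribution of primes among the residue classes modulo $3$ (a deep but entirely standard result), which I would simply cite rather than prove.
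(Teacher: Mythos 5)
Your proposal is correct and matches the paper's own proof: both deduce the first claim from Theorem \ref{thm:main} together with Dirichlet's theorem, and the second from the fact that primes equal to $2$ modulo $3$ have natural density $1/\varphi(3) = 1/2$. Your explicit mention of the prime number theorem for arithmetic progressions merely spells out the standard input that the paper cites implicitly.
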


\begin{proof}
The first claim immediately follows from Theorem \ref{thm:main} and Dirichlet's Theorem.
The second claim is deduced from the fact that the natural density of primes equal to $2$ modulo $3$ is $1/\varphi (3) = 1/2$, where $\varphi$ is Euler's totient function.
\end{proof}

\begin{prob}
Does there exist a superspecial curve of genus $4$ in characteristic $p$ for any $p > 13$ with $p \equiv 1 \pmod{3}$?
Cf.\ the non-existence for $p=7$ is already shown in {\rm \cite{KH16}}, whereas the existence for $p = 13$ is shown, see e.g., {\rm \cite{ManyPoints}}.
\end{prob}

\begin{prob}
Find a different condition from $p \equiv 2 \pmod{3}$ such that there exists a nonhyperelliptic superspecial curve of genus $4$ in characteristic $p$.
Cf.\ in the case of $g=1$, the elliptic curve $E : y^2 = x^3 + x$ is supersingular if $p \equiv 3 \pmod{4}$ and ordinary if $p \equiv 1 \pmod{4}$.
$($Also for hyperelliptic curves, such conditions are already found, see e.g., {\rm \cite{Taf}} and {\rm \cite{Taf2}}.$)$
\end{prob}

\section{Application: Finding maximal curves over $K = \mathbb{F}_{p^2}$ for large $p$}\label{sec:comp}
In the following, we set $K := \mathbb{F}_{p^2}$ with $p > 2$.
It is known that any maximal or minimal curve over $\mathbb{F}_{p^2}$ is supersepcial.
Conversely, any superspecial curve over an algebraically closed field descends to a maximal or minimal curve over $\mathbb{F}_{p^2}$, see the proof of \cite[Proposition 2.2.1]{KH16}.
Recall from Theorem \ref{thm:main} that $C_p = V(Q,P)$ with $Q = 2 y w + z^2$ and $P= x^3 + y^3 + w^3$ is a superspecial curve of genus $4$ if and only if $p \equiv 2 \pmod{3}$.
We computed the number of $\mathbb{F}_{p^2}$-rational points on $C_p$ for $3 \leq p \leq 269$ using a computer algebra system Magma \cite{Magma}.
Table \ref{table:3} shows our computational results for $3 \leq p \leq 100$.
We see from Table \ref{table:3} that any superspecial $C_p$ is maximal over $\mathbb{F}_{p^2}$ for $3 \leq p \leq 100$ (also for $101 \leq p \leq 269$, but omit to write them in the table).
From our computational results, let us give a conjecture on the existence of $\mathbb{F}_{p^2}$-maximal nonhyperelliptic curves of genus $4$.

\renewcommand{\arraystretch}{1.5}
\begin{table}[t]
\centering{
\caption{The number of $\mathbb{F}_{p^2}$-rational points on $C_p = V(Q,P)$ for $3 \leq p \leq 100$, where $Q = 2 y w + z^2$ and $P= x^3 + y^3 + w^3$.
We denote by $\#C_p (\mathbb{F}_{p^2})$ the number of $\mathbb{F}_{p^2}$-rational points on $C_p $ for each $p$.
}
\label{table:3}
\scalebox{0.97}{
\begin{tabular}{c||c|c|c||c||c|c|c} \hline
$p$ & $p \bmod{3}$ & S.sp.\ or not & $\# C_p ( \mathbb{F}_{p^2} )$  & $p$ & $p \bmod{3}$ & S.sp.\ or not & $\# C_p ( \mathbb{F}_{p^2} )$ \\ \hline
$3$ & $0$ & Not S.sp. & $10$  & $43$ & $1$ & Not S.sp. & $1938$   \\ \hline
$5$ & $2$ & S.sp. & $66$ (Max.)  & $47$ & $2$ & S.sp. & $2586$ (Max.)  \\ \hline
$7$ & $1$ & Not S.sp. & $48$             & $53$ & $2$ & S.sp. & $3234$ (Max.)  \\ \hline
$13$ & $1$ & Not S.sp. & $192$             &  $59$ & $2$ &  S.sp. & $3954$ (Max.) \\ \hline
$11$ & $2$ & S.sp. & $210$ (Max.)  &  $61$ & $1$ & Not S.sp. & $3648$ \\ \hline
$17$ & $2$ & S.sp. & $426$ (Max.)  &  $67$ & $1$ & Not S.sp. & $4368$ \\ \hline
$19$ & $1$ & Not S.sp. & $336$             &    $71$ & $2$ & S.sp.  &  $5610$ (Max.)    \\ \hline
$23$ & $2$ & S.sp. &  $714$ (Max.) &  $73$ & $1$ &  Not S.sp. & $5376$  \\ \hline
$29$ & $2$ & S.sp. & $1074$ (Max.)  & $79$ & $1$ & Not S.sp.  & $6384$ \\ \hline
$31$ & $1$ & Not S.sp. & $1146$                 &  $83$ & $2$ & S.sp.  & $7554$ (Max.)  \\ \hline
$37$ & $1$ & S.sp. & $1334$                        & $89$ & $2$ & S.sp.  & $8634$ (Max.) \\ \hline
$41$ & $2$ & S.sp. & $2010$ (Max.)  &  $97$ & $1$ & Not S.sp.  & $9408$ \\ \hline
\end{tabular}
}}
\end{table}

\begin{conje}
For any $p$ with $p \equiv 2 \pmod{3}$, the curve $C_p$ over $\mathbb{F}_{p^2}$ is maximal.
\end{conje}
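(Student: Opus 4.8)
The goal is to show that, for $p \equiv 2 \pmod 3$, the number of $\F_{p^2}$-rational points on $C_p$ attains the Hasse--Weil upper bound $p^2 + 1 + 2 g p = p^2 + 8 p + 1$ (recall $g = 4$). By Theorem \ref{thm:main}, $C_p$ is superspecial, so its Jacobian is $\F_{p^2}$-isogenous to a product of supersingular elliptic curves; over $\F_{p^2}$ each such factor is either maximal or minimal, so every eigenvalue of the Frobenius $\pi$ equals $p$ or $-p$. Thus the problem reduces to a \emph{sign} question: one must show that all Frobenius eigenvalues equal $-p$ (maximal) rather than $+p$ (minimal), equivalently that $\# C_p(\F_{p^2}) = p^2 + 8p + 1$ and not $p^2 - 8p + 1$. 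The absolute values of the eigenvalues are already forced by superspeciality; only their common sign is at stake.

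My plan is to compute $\# C_p(\F_{p^2})$ exactly through the reduction to a diagonal equation mentioned in the Acknowledgments. The vertex $[1:0:0:0]$ of the cone $Q=0$ does not lie on $C_p$ (there $P = 1 \neq 0$), so every point has either $w \neq 0$ or $w = 0$. On the affine chart $w = 1$ the equations $z^2 = -2 y$ and $x^3 + y^3 + 1 = 0$ let one eliminate $y = -z^2/2$, yielding the single diagonal (Fermat-type) equation $z^6 - 8 x^3 = 8$ in $(x,z)$, while the locus $w = 0$ (where $z = 0$ and $x^3 + y^3 = 0$) is a finite point set that can be counted directly. Writing $N$ for the affine count and letting $\chi$ be a cubic character of $\F_{p^2}^\times$ (with $\chi(0)=0$), one has
\[
N = \sum_{z \in \F_{p^2}} \#\bigl\{ x \in \F_{p^2} : x^3 = \tfrac{z^6 - 8}{8} \bigr\} = q + \sum_{z \in \F_{p^2}} \bigl( \chi(\tfrac{z^6-8}{8}) + \bar\chi(\tfrac{z^6-8}{8}) \bigr),
\]
where the fibres over $z^6 = 8$ are treated apart. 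Substituting $s = z^2$ introduces the quadratic character $\rho$ via $\sum_z f(z^2) = \sum_s (1+\rho(s)) f(s)$, and expanding $s^3 - 8$ turns the inner sums into standard Jacobi sums of characters of order dividing $6$ over $\F_{p^2}$. Hence $\# C_p(\F_{p^2})$ becomes an explicit combination of Jacobi sums $J(\chi^a,\rho^b)$ plus elementary boundary terms.

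The next step is to evaluate these Jacobi sums using $p \equiv 2 \pmod 3$. Since $3 \mid p^2 - 1$ but $3 \nmid p - 1$, the cubic character $\chi$ does not descend to $\F_p$ and satisfies $\chi \circ \mathrm{Frob} = \chi^p = \chi^{-1} = \bar\chi$; that is, $\chi$ is interchanged with its conjugate by the Frobenius of $\F_{p^2}/\F_p$. This invariance forces the relevant Gauss and Jacobi sums to be \emph{real}, and together with $|g(\chi)| = p$ it pins each of them down to $\pm p$ --- which is precisely the reformulation of superspeciality above. Thus the entire count collapses to determining the common sign of these pure Gauss/Jacobi sums.

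Determining that sign is where I expect the genuine difficulty to lie, and it is the reason the statement is posed only as a conjecture. The absolute value $|g(\chi)| = p$ is classical, but the sign of a pure Gauss sum is subtle: it is governed by Stickelberger's congruence and by the $p$-adic behaviour of $\chi$ on the constants $2$ and $-1$ (the factor $8 = 2^3$ and the coefficient $-2$ in $Q$ enter exactly here), and a priori it could depend on the residue of $p$ modulo higher moduli. A complete proof would require showing that Stickelberger's theorem, combined with the explicit constants occurring in $Q$ and $P$, forces the maximal sign uniformly for \emph{every} prime $p \equiv 2 \pmod 3$; the data in Table \ref{table:3} confirm this for $3 \le p \le 269$ but do not by themselves settle the general case. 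I therefore expect the Jacobi-sum \emph{sign evaluation}, rather than the diagonal reduction or the point-count bookkeeping, to be the crux of any proof of the conjecture.
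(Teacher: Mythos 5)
This statement is Conjecture 4.1 of the paper, and the paper itself does \emph{not} prove it: all it offers is the point counts in Table \ref{table:3} (for $3 \le p \le 269$) and the Remark at the end of Section \ref{sec:comp}, which records precisely the diagonal-equation reduction you describe (eliminate $y$ via $Q=0$ to get an equation of Fermat type in $x,z$) and states explicitly that the author has not succeeded in completing the count by any known method. Your proposal is in the same position, and you say so honestly: the character-sum bookkeeping is set up, but the sign of the pure Gauss/Jacobi sums --- which is the entire content of the conjecture --- is never determined. So what you have is not a proof but a plan for one, and essentially the same plan the paper already sketches.

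Beyond that admitted gap, the one step you do claim as established contains a genuine error. You assert that superspeciality of $C_p$ (Theorem \ref{thm:main}) forces every Frobenius eigenvalue over $\F_{p^2}$ to be $\pm p$, so that ``only a common sign is at stake.'' That is false as stated. Superspeciality is a geometric property: it only sees $C_p \times \overline{\F}_p$, hence is shared by every $\F_{p^2}$-twist of $C_p$, whereas maximality is not (a quadratic twist of a maximal curve is minimal, and other twists generally land strictly between the Hasse--Weil bounds). Concretely, for $p \equiv 2 \pmod 3$ there exist supersingular (hence superspecial) elliptic curves over $\F_{p^2}$ with trace $\pm p$, whose Frobenius eigenvalues are $p\zeta_6^{\pm 1}$, neither $p$ nor $-p$; and the abelian surface $E_{\max} \times E_{\min}$ is superspecial over $\F_{p^2}$ with eigenvalues of both signs. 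What the paper actually invokes (the proof of \cite[Proposition 2.2.1]{KH16}, quoted at the start of Section \ref{sec:comp}) is that the geometric curve descends to \emph{some} maximal or minimal model over $\F_{p^2}$; that the particular model $V(2yw+z^2,\,x^3+y^3+w^3)$ is that model --- indeed, that it is even maximal-or-minimal at all --- is part of what the conjecture asserts, not a consequence of Theorem \ref{thm:main}. On the other hand, your pessimism about the sign is probably excessive: for a character of order $m$ with $m \mid p+1$, Gauss sums over $\F_{p^2}$ are pure and their sign is classically known (Stickelberger's evaluation; see Berndt--Evans--Williams), which is exactly why Fermat curves of degree dividing $p+1$ are known to be $\F_{p^2}$-maximal. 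Carrying that evaluation through your reduction, boundary terms and the quadratic-character twist included, is the missing work; until it is done the statement remains, as in the paper, a conjecture.
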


\begin{rem}
We can reduce computing the number of $\mathbb{F}_{p^2}$-rational points on $C_p$ into computing that of zeros of a {\it diagonal} equation.
Specifically, by $2 y w + z^2 = 0$ and $x^3 + y^3 + w^3 = 0$, we have $x^3 + y^3 + (- z^2 / (2 y)^{-1})^3 = 0$ and thus $8 x^3 y^3 + 8 y^6 - z^6 = 0$ if $y \neq 0$.
Putting $X = x y$, one has the diagonal equation $8 X^3 + 8 y^6 - z^6 = 0$.
Hence, we may apply known methods to count the number of rational points of diagonal equations, see e.g., \cite{Wan} and \cite{Weil}.
At the time of this writing (as of April 24, 2018), however, we have not succeeded in applying any known method.
\end{rem}


\end{document}